\theoremstyle{remark}
\newtheorem{theorem}{Theorem}[section]
\newtheorem{corollary}[theorem]{Corollary}
\newtheorem{lemma}[theorem]{Lemma}
\newtheorem{conjecture}[theorem]{Conjecture}
\theoremstyle{definition}
\title[On the homology of $B\Gamma_n^\mathbb{C}$]{On the homology of $B\Gamma_n^\mathbb{C}$ and its application to complex structures on open manifolds}
\author{Filip Samuelsen}
\address{Department of Mathematics, State University of New York, Stony Brook, New York 11790}
\begin{document}
\begin{abstract}
Since the 1970s, it has been known that any open connected manifold 
of dimension 2, 4 or 6 admits a complex analytic structure 
whenever its tangent bundle admits a complex linear structure. 
For half a century, this has been conjectured to hold true for manifolds 
of any dimension. 
In this paper, we extend the result to manifolds of dimension 8. 

To prove the result new $\Gamma_n^\mathbb{C}$-structures on $\mathbb{CP}^n$ are constructed. 
As a consequence we derive a theorem concerning the homology 
of Haefligers classifying space, $B\Gamma_n^\mathbb{C}$. The result then follows
from obstruction theory. 
\end{abstract}
\maketitle
\section{Introduction} \noindent
An almost complex structure on a smooth manifold $M$ of dimension $2n$ is a complex linear structure on 
its tangent bundle $TM$, that is a vector bundle map $J:TM \to TM$ covering the identity 
and such that $J \circ J = -\mathrm{Id}$. This is equivalent to a reduction of the structure group of 
$TM$ to $\mathrm{GL}(n, \mathbb{C})$. A complex analytic structure on $M$ gives rise to an underlying 
almost complex structure. The almost complex structures which arise in this way are said to be integrable. 

A basic problem is to determine when a given almost complex structure 
is homotopic to an integrable one. 
In order to make any progress at all, we make one crucial concession --  
we restrict ourselves to considering the problem for open manifolds. 
That is manifolds without any compact components. 

At the time of writing, there is no known example of an open manifold admitting an almost 
complex structure, but not admitting any complex analytic coordinates. 
Thus, the following has been conjectured  
\begin{conjecture} \label{AlmostComplexImpliesComplex}
Let $M$ be a manifold with no compact components. If $J: TM \to TM$ is an almost complex structure 
on $M$ then $J$ is homotopic to an integrable almost complex structure. 
\end{conjecture}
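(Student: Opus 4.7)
The plan is to translate the question into a classical lifting problem via Haefliger's classifying space, apply Gromov's h-principle to convert it into a purely homotopical statement, and finally attack that statement by obstruction theory. Recall that Haefliger theory provides a classifying space $B\Gamma_n^{\mathbb{C}}$ for codimension-$n$ holomorphic foliations, together with a natural ``normal bundle'' map
\[
\nu : B\Gamma_n^{\mathbb{C}} \to B\mathrm{GL}(n,\mathbb{C}).
\]
A genuine complex analytic structure on a manifold $M$ of real dimension $2n$ is a codimension-$n$ holomorphic foliation whose leaves are points, equivalently a $\Gamma_n^{\mathbb{C}}$-structure whose normal bundle is isomorphic to $TM$. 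Such a structure corresponds to a lift through $\nu$ of the classifying map $\tau_M : M \to B\mathrm{GL}(n,\mathbb{C})$ of the complex bundle $(TM, J)$. Since an almost complex structure $J$ is precisely the data of $\tau_M$, the conjecture becomes the statement that this $\tau_M$ admits a lift through $\nu$ realized by an honest complex structure.

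Next, I would invoke Gromov's h-principle. Haefliger structures form a microflexible sheaf, and on open manifolds the h-principle yields that every formal lift, i.e.\ any homotopy lift of $\tau_M$ through $\nu$, can be deformed to one coming from an actual complex analytic structure whose induced almost complex structure is homotopic to $J$. This reduces Conjecture~\ref{AlmostComplexImpliesComplex} to the purely topological problem of producing such a homotopy lift.

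Then I would attack the lifting problem with obstruction theory. An open connected $2n$-manifold has the homotopy type of a CW complex of dimension at most $2n-1$, so the obstructions to lifting $\tau_M$ against $\nu$ live in $H^{k+1}(M;\pi_k(F\Gamma_n^{\mathbb{C}}))$ with $k \leq 2n-2$, where $F\Gamma_n^{\mathbb{C}}$ denotes the homotopy fiber of $\nu$. It therefore suffices to establish that $F\Gamma_n^{\mathbb{C}}$ is $(2n-1)$-connected; once this is done, the desired lift exists and the conjecture follows for dimension $2n$.

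The hard part, and presumably the entire content of the paper, is this connectivity bound. The space $B\Gamma_n^{\mathbb{C}}$ is notoriously inaccessible: it classifies an infinite-dimensional holomorphic pseudogroup, and its homotopy groups have no direct geometric description. I would expect the calculation to proceed by comparing $B\Gamma_n^{\mathbb{C}}$ with better-understood spaces --- the smooth Haefliger space, formal versions built from the holomorphic Gelfand--Fuks complex, bundles of jets --- and invoking Mather--Thurston-type theorems to transport connectivity information back to $F\Gamma_n^{\mathbb{C}}$. Because these bounds weaken rapidly with $n$, this is precisely the step that explains why the previously known results stopped at dimension $6$, and why pushing the calculation through for $n=4$ and $n=5$ is substantive work rather than a routine extension.
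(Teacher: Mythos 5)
There is a genuine gap, and it sits exactly where you place ``the hard part.'' First, note that the statement you are proving is Conjecture \ref{AlmostComplexImpliesComplex}, which the paper does \emph{not} prove: it remains open in dimension $12$ and above, and the paper only establishes it for open manifolds of dimension at most $10$ (Theorem \ref{Dimension10AndLess}). Your first two steps -- translating an integrable structure homotopic to $J$ into a lift of the classifying map $f\colon M \to B\mathrm{GL}(n,\mathbb{C})$ through $Bd\colon B\Gamma_n^{\mathbb{C}} \to B\mathrm{GL}(n,\mathbb{C})$, valid for open $M$ by Haefliger's theorem together with Gromov's h-principle -- are exactly the paper's framework. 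But your final step reduces everything to the claim that the homotopy fiber $B\overline{\Gamma_n^{\mathbb{C}}}$ is $(2n-1)$-connected, and you offer no argument for this beyond the expectation that Mather--Thurston-type comparisons would deliver it. That claim is not a known theorem; it is (a strengthening of) the open problem itself. What is actually proved, in this paper and its predecessors, is connectivity only through degree $n+1$ (the Connectivity Theorem), the paper explicitly states that nothing is known in the range between $n+1$ and $2n$, and by Bott's theorem $\pi_{2n+1}(B\overline{\Gamma}_n^{\mathbb{C}})$ is infinitely generated, so such vanishing statements are delicate and cannot be extrapolated. Moreover, the paper deliberately conjectures only \emph{finiteness} of $\pi_*(B\overline{\Gamma}_n^{\mathbb{C}})$ for $* \le 2n$ (Conjecture \ref{FiniteHomotopyGroups}), not vanishing, precisely because vanishing is too strong to expect to verify; Theorem \ref{FiniteImpliesConj} shows finiteness already suffices, but via a subtler mechanism than your cell-by-cell obstruction theory on $M$: one lifts over the $(2n-2)$-skeleton of $B\mathrm{GL}(n,\mathbb{C})$ itself by extending the tautological section over $B\mathrm{GL}(n,\mathbb{C})^{\delta}$, using Suslin rigidity to kill the relative obstruction groups when the coefficients are finite. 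Naive obstruction theory on $M$, as you set it up, genuinely needs the coefficient groups $\pi_k$ to vanish (or the specific obstruction classes to die), so finiteness alone would not rescue your argument.

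For contrast, here is how the paper gets as far as dimension $10$ without the connectivity you assume: since $M$ is open it is homotopy equivalent to a complex of dimension $\le 2n-1$, and since $B\mathrm{GL}(n,\mathbb{C})$ has only even cells, $f$ can be pushed into the $(2n-2)$-skeleton of $B\mathrm{GL}(n,\mathbb{C})$; one then tries to section $Bd$ over that skeleton. For $n=5$ the $(n+1)$-connectivity of the fiber leaves a single primary obstruction in $H^{8}(B\mathrm{GL}(5,\mathbb{C}),\pi_7(B\overline{\Gamma_5^{\mathbb{C}}}))$, and by naturality this class lies in the kernel of $Bd^*$, which is shown to be injective in this range via the Homology Surjection Theorem (Corollary \ref{InjectivityCorollary}). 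That homological input, proved by constructing explicit holomorphic foliation examples whose Chern numbers generate, is what replaces the missing connectivity -- and it is also why the method stops at dimension $10$: beyond the primary obstruction the naturality trick no longer suffices. So your proposal is a correct reduction of the conjecture to an unproved (and possibly unprovable by current methods) connectivity statement, not a proof.
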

The analogous conjecture for symplectic manifolds was proven by Gromov using his h-principle 
for open manifolds \cite{Eliashberg}. That is, if $M$ is an open almost complex manifold, then $M$ admits 
a symplectic structure, i.e. a non-degenerate closed two form $\omega$. 
Moreover, one can freely choose the cohomology class of $\omega$ within $H^2(M, \mathbb{R})$. 
In this paper, we make the following partial progress towards conjecture \ref{AlmostComplexImpliesComplex}
\begin{theorem} \label{Dimension10AndLess}
Let $M$ be an open manifold of dimension at most $8$. 
Then $M$ admits a complex analytic structure if and only if it admits an 
almost complex structure. Moreover, any almost complex structure $J$ 
is homotopic to an integrable one. 
\end{theorem}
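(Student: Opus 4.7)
The strategy is to translate the integrability question into a lifting problem via Haefliger's classifying space, apply Gromov's h-principle to reduce to homotopy-theoretic data, and then resolve the remaining lifting problem using connectivity properties of $B\Gamma_n^{\mathbb{C}}$.

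First I would set up the Haefliger framework for transversely holomorphic structures. A complex analytic atlas on $M^{2n}$ is a $\Gamma_n^{\mathbb{C}}$-structure of complex codimension $n$ whose leaves are points, equivalently a Haefliger structure whose underlying normal microbundle is isomorphic, as a rank-$n$ complex vector bundle, to $TM$. Writing $\nu : B\Gamma_n^{\mathbb{C}} \to B\mathrm{GL}(n,\mathbb{C})$ for the normal bundle map, integrable almost complex structures on $M$ homotopic to a given $J$ correspond, up to concordance, to homotopy lifts of the classifying map $\tau_J : M \to B\mathrm{GL}(n,\mathbb{C})$ through $\nu$.

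Next I would invoke Gromov's h-principle for open manifolds in the holomorphic Haefliger setting. On an open manifold, the forgetful map from genuine $\Gamma_n^{\mathbb{C}}$-structures with prescribed normal bundle to their formal analogues is a weak equivalence, so producing an integrable $J$ reduces to producing the lift above, and the assertion that any $J$ is homotopic to an integrable one follows from path-connectedness of the space of such lifts.

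Standard obstruction theory then takes over: the obstructions to lifting $\tau_J$ through $\nu$ lie in $H^{k+1}(M;\pi_k F_n)$, where $F_n$ denotes the homotopy fiber of $\nu$, and similarly the obstructions to connecting two lifts lie in $H^{k}(M;\pi_k F_n)$. Since $M$ is open of real dimension $2n \leq 10$, it has the homotopy type of a CW complex of dimension at most $2n-1$, so every obstruction vanishes as soon as $F_n$ is $(2n-2)$-connected, i.e.\ $\pi_k F_n = 0$ for $0 \leq k \leq 2n-2$.

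The main obstacle, and presumably the technical heart of the paper, is therefore establishing this connectivity bound for $n=4$ and $n=5$; for $n\leq 3$ it is classical and explains why the dimensions $2,4,6$ cases have long been known. I expect to attack this by using Mather--Thurston-type techniques adapted to the holomorphic category, together with the existing stable computations of $B\Gamma_n^{\mathbb{C}}$, showing that the only potential obstruction classes in the range $k \leq 2n-2$ are detected by holomorphic characteristic classes that vanish on $\tau_J$ for formal reasons. If such vanishing can be pushed through for $n = 5$, the theorem follows; the current state of knowledge suggests that $n \geq 6$ lies just beyond what the same argument can reach, consistent with the conjecture remaining open in dimension $12$ and above.
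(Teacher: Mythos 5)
Your setup is the same as the paper's (Haefliger's classifying space, lifting $\tau_J$ through $Bd:B\Gamma_n^{\mathbb{C}}\to B\mathrm{GL}(n,\mathbb{C})$, obstruction theory on the homotopy fiber), but the mechanism you propose for killing the obstructions does not work and is in fact the point where the paper does something different. You reduce everything to showing that the fiber $B\overline{\Gamma_n^{\mathbb{C}}}$ is $(2n-2)$-connected, i.e.\ $\pi_k=0$ for $k\le 2n-2$; for $n=5$ that means vanishing up to $\pi_8$. The paper proves vanishing only for $k\le n+1$ (so up to $\pi_6$ when $n=5$), and it states explicitly that nothing is known about $\pi_k(B\overline{\Gamma_n^{\mathbb{C}}})$ in the range $n+1<k\le 2n$, while $\pi_{2n+1}$ is infinitely generated by Bott. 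So the connectivity statement your argument hinges on is an open problem, not something one can expect to reach by Mather--Thurston-type arguments in this range, and your proposal gives no concrete route to it.

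The paper closes the gap with two ingredients you are missing. First, since $M$ is open of dimension $10$ it is homotopy equivalent to a $9$-complex, and because $B\mathrm{GL}(5,\mathbb{C})$ has a CW structure with only even cells, the classifying map compresses into the $8$-skeleton of $B\mathrm{GL}(5,\mathbb{C})$; one then tries to section $Bd$ over that skeleton universally, rather than lifting over $M$ directly. With $\pi_k(B\overline{\Gamma_5^{\mathbb{C}}})=0$ for $k\le 6$, the only remaining obstruction is the primary one, a class in $H^8\bigl(B\mathrm{GL}(5,\mathbb{C});\pi_7(B\overline{\Gamma_5^{\mathbb{C}}})\bigr)$, and nothing forces $\pi_7$ to vanish. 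Second, this class is killed not by connectivity but by naturality: its pullback under $Bd^*$ is the primary obstruction to lifting $Bd$ itself, which lifts tautologically, so the class lies in $\ker Bd^*$; the Homology Surjection Theorem ($Bd_*$ surjective on $H_*$ for $*\le 2n$, proved by exhibiting $\Gamma_n^{\mathbb{C}}$-structures whose Chern numbers form an integral basis) gives injectivity of $Bd^*$ with arbitrary coefficients in this range, hence the obstruction vanishes and the section over the $8$-skeleton exists. Working over $M$ itself, or demanding $(2n-2)$-connectivity of the fiber, bypasses exactly the piece of structure that makes dimension $10$ accessible; this is why your plan, as stated, would also not recover the known dimension $8$ case without the even-cell compression, and cannot reach dimension $10$ at all.
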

We now explain the main ingredients that go into the proof. 
Let $\Gamma_n^{\mathbb{C}}$ denote the topological groupoid of germs of local biholomorphisms of
$\mathbb{C}^n$ equipped with the sheaf topology and let $B\Gamma_n^{\mathbb{C}}$ be a CW-complex 
representing the classifying space for $\Gamma_n^{\mathbb{C}}$-structures. 
Differentiating a germ at its source induces a continuous homomorphism 
$\Gamma_n^{\mathbb{C}} \to \mathrm{GL}(n, \mathbb{C})$, hence also a continuous map 
$$Bd: B\Gamma_n^\mathbb{C} \to B\mathrm{GL}(n, \mathbb{C})$$
We convert this map to a fibration and denote the homotopy fiber by 
$B\overline{\Gamma_n^\mathbb{C}}$. 
Let $M$ be an open manifold and $J$ be an almost complex structure on 
$TM$. Then, there is a map $f:M \to B \mathrm{GL}(n, \mathbb{C})$ classifying $(TM, J)$. 
A special case of Haefligers theory \cite{Haefligger1, Haefligger2} asserts that
there exists a lift of $f$ to $B\Gamma_n^\mathbb{C}$ if and only if 
$J$ is homotopic to an integrable almost complex structure.
\begin{center}
\begin{tikzcd}
 & B\Gamma_n^{\mathbb{C}} \arrow[d, "Bd"] & \arrow[l] B\overline{\Gamma_n^\mathbb{C}} \\ 
M \arrow[ur, dashed, "\mathcal{F}"] \arrow[r, "f"] & B\mathrm{GL}(n, \mathbb{C})&
\end{tikzcd}
\end{center}
So for the basic problem mentioned initially, it is important to understand 
the topology of $B\overline{\Gamma_n^\mathbb{C}}$. 
Here is an example of what we would like:
\begin{conjecture}[The Holomorphic Haefliger-Thurston Conjecture] \label{HolomorphicHaefligerThurstonConjecture}
$\pi_*(B\overline{\Gamma}_n^\mathbb{C}) = 0$ for $* \leq 2n$.
\end{conjecture}
This is a holomorphic analogue of the Haefliger-Thurston conjecture \cite{Nariman}.
The main reason for making conjecture \ref{HolomorphicHaefligerThurstonConjecture} is that it is known that 
$\pi_{2n+1}(B\overline{\Gamma}_n^\mathbb{C})$ is infinitely generated \cite{Bott}. 
At the time of writing the following is known
\begin{theorem}[The Connectivity Theorem]
$\pi_*(B\overline{\Gamma_n^\mathbb{C}}) = 0$ for $* \leq n$.
\end{theorem}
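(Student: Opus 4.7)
The plan is to translate the statement into a geometric extension problem and then apply Gromov's h-principle, adapting Haefliger's classical method from smooth to holomorphic foliations. An element of $\pi_k(B\overline{\Gamma_n^\mathbb{C}})$ is represented, by Haefliger's theory, by a concordance class of pairs $(\mathcal{F}, \tau)$, where $\mathcal{F}$ is a $\Gamma_n^\mathbb{C}$-Haefliger structure on $S^k$ and $\tau$ is a trivialization of its complex rank-$n$ normal bundle $\nu(\mathcal{F})$. Showing that the class vanishes amounts to exhibiting a framed extension of $(\mathcal{F}, \tau)$ across $D^{k+1}$.

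For the geometric realization, I would embed $S^k$ smoothly into $\mathbb{C}^N$ for some large $N$, and extend this to a smooth embedding of $D^{k+1}$. Take a Stein tubular neighborhood $U \subset \mathbb{C}^N$ of the disk and a smaller Stein tubular neighborhood $V \subset U$ of $S^k$. By Haefliger's geometric interpretation, the pair $(\mathcal{F}, \tau)$ may be represented, up to concordance, by the germ along $S^k$ of a holomorphic foliation $\mathcal{H}$ of complex codimension $n$ in $V$, together with a holomorphic trivialization of the conormal bundle of $\mathcal{H}$ along $S^k$.

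The argument then splits into two stages. First, produce a \emph{formal} extension of $\mathcal{H}$ from $V$ to $U$: that is, a continuous codimension-$n$ complex distribution on $U$ together with a trivialization of the conormal bundle, agreeing with $\mathcal{H}$ near $S^k$. The obstructions live in relative cohomology of the pair $(D^{k+1},S^k)$ with coefficients in the homotopy groups of the classifying space of such formal codimension-$n$ complex distributions, which is built out of $B\mathrm{GL}(n,\mathbb{C})$ and a complex Stiefel-type manifold. The hypothesis $k \leq n+1$, combined with Bott periodicity of $\mathrm{U}(n)$ and the Oka--Grauert principle (so continuous bundles on $U$ are holomorphic), is precisely what is needed to make these obstructions vanish. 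Second, integrate the formal extension to an honest holomorphic extension by invoking Gromov's h-principle, in the form of the Oka--Forstneri\v{c} theorem for holomorphic submersions out of open Stein manifolds: every formal holomorphic submersion into $\mathbb{C}^n$ is homotopic, rel prescribed boundary data, to an actual holomorphic submersion. The resulting genuine holomorphic foliation on $U$ extending $\mathcal{H}$ furnishes the required null-concordance.

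The main obstacle will be the formal extension step: identifying the correct classifying space of formal codimension-$n$ holomorphic distributions with framed conormal, pinpointing its low-dimensional homotopy groups, and checking that the obstructions really do vanish in the precise range $k \leq n+1$. This is where the connectivity bound of the theorem is forced, and it is also the point at which the geometric input from Gromov's h-principle most directly meets the algebraic input from obstruction theory.
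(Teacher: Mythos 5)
There is a genuine gap at your second stage, and it sits exactly where the whole difficulty of the theorem lies. After the formal extension you have, at best (via Oka--Grauert), a holomorphic codimension-$n$ subbundle of $TU$ over a Stein neighborhood $U$ of the disk; this subbundle has rank $N-n>1$ and is not involutive in general, and there is no h-principle in the holomorphic category that converts such a formal distribution into a genuine holomorphic foliation. The Oka--Forstneri\v{c} theorem you invoke concerns holomorphic submersions $U \to \mathbb{C}^n$; to apply it relative to the prescribed germ along $S^k$ you would need that germ to be defined by a global holomorphic submersion compatible with the framing, which a general Haefliger structure with trivialized normal bundle is not --- indeed, if it were, the class would already be null-concordant, so the argument is circular at the decisive point. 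A second symptom that the scheme cannot work as written: your obstruction analysis does not actually produce the bound $k \le n+1$. For $N$ large, the formal data amount to a map of the pair $(D^{k+1},S^k)$ into a space homotopy equivalent to a complex Stiefel manifold of $n$-coframes in $\mathbb{C}^N$, which is $2(N-n)$-connected, so the formal extension exists with no obstruction in a range far beyond $n+1$. Hence, if the integration step were available, your argument would prove vanishing of $\pi_*(B\overline{\Gamma}_n^\mathbb{C})$ well past $2n$, contradicting Bott's theorem that $\pi_{2n+1}(B\overline{\Gamma}_n^\mathbb{C})$ is infinitely generated.

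The paper avoids integrating higher-rank distributions altogether, and this is precisely what forces the range $* \le n+1$: it works on $S^{n+1}$ itself, embedded as a totally real submanifold of a complexification, and identifies lifts of the classifying map of $\mathbb{C}\otimes TS^{n+1}$ to $B\Gamma_n^{\mathbb{C}} \times B\mathrm{GL}(1,\mathbb{C})$ with germs of holomorphic foliations whose leaves are one-dimensional. One-dimensional holomorphic distributions are automatically involutive, so Oka--Grauert plus Frobenius (Theorem \ref{BijectionFolSec}) gives a bijection with homotopy classes of complex line subbundles of $\mathbb{C}\otimes TS^{n+1}$, while the h-principle for $\mathbb{C}$-transversality together with the approximation and graph constructions (Theorems \ref{CTransversalityTheorem} and \ref{GammaStructureHomotopicToFoliation}) gives surjectivity of $\mathrm{Fol}_{n}(S^{n+1}) \to \mathfrak{X}(S^{n+1})$. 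Triviality of $\mathbb{C}\otimes TS^{n+1}$ then makes these sets vanish for $n \neq 1$, and a diagram chase in the homotopy exact sequence yields $\pi_{n+1}(B\overline{\Gamma}_n^\mathbb{C})=0$, with a separate small computation for $n=1$. To salvage your disk-extension approach you would need a genuinely new device for integrating codimension-$n$ holomorphic distributions with leaves of dimension greater than one; nothing of the sort is currently known, which is also why Conjecture \ref{FiniteHomotopyGroups} remains open.
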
 \noindent
The case $*  < n$ is due to Landweber \cite{Landweber} and the case 
$*=n$ is due to Adachi \cite{Adachi1}. For $n=1$ conjecture 
\ref{HolomorphicHaefligerThurstonConjecture} has been proven by Haefliger–Sithanantham \cite{Haefliger-Sithanantham}.
Besides the theorem of Haefliger–Sithanantham, there is at the time of writing no information about the homotopy groups 
of $B\overline{\Gamma_n^\mathbb{C}}$ in the range between $n$ and $2n$.

Combining the connectivity theorem with the fact that $B\mathrm{GL}(n, \mathbb{C})$ admits a CW-composition consisting only of cells in even dimension, one has that any almost complex structure on an open manifold of 
dimension at most 6 is homotopic to an integrable one. (See \cite{Adachi2}).
In order the extend this result to manifolds of dimension 8, we will prove a theorem about the integral 
homology of $B\Gamma_n^\mathbb{C}$. 
\begin{theorem}[The Homology Surjection Theorem] \label{HomologySurjectivityTheorem}
The map $Bd_*: H_*(B\Gamma_n^\mathbb{C}, \mathbb{Z}) \to H_*(B\mathrm{GL}(n, \mathbb{C}), \mathbb{Z})$ 
is surjective for $* \leq 2n$.
\end{theorem}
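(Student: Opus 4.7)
The plan is to realise each class of $H_\ast(B\mathrm{GL}(n,\mathbb{C}))$ in the range $\ast \leq 2n$ as the image under $Bd_\ast$ of a class coming from a concrete $\Gamma_n^\mathbb{C}$-structure, produced via Gromov's h-principle on open manifolds \cite{Eliashberg}.

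The main construction goes as follows. Given a closed manifold $M$ of real dimension $2k \leq 2n$ together with a complex rank-$n$ bundle $E \to M$, let $Y$ denote the total space of $E$, an open manifold of real dimension $2n + 2k$, with projection $p : Y \to M$. The canonical splitting $TY = p^\ast TM \oplus p^\ast E$ exhibits $p^\ast E$ as a complex rank-$n$ quotient of $TY$, i.e., the normal bundle of a formal $\Gamma_n^\mathbb{C}$-structure on $Y$. Since $Y$ is open, Gromov's h-principle integrates this to a genuine $\Gamma_n^\mathbb{C}$-structure, namely a map $Y \to B\Gamma_n^\mathbb{C}$ whose composition with $Bd$ classifies $p^\ast E$. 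Precomposing with the zero section $M \hookrightarrow Y$ yields a lift of the classifying map $f : M \to B\mathrm{GL}(n, \mathbb{C})$ of $E$, and pushing forward $[M]$ produces a class in $H_{2k}(B\Gamma_n^\mathbb{C})$ that maps under $Bd_\ast$ to $f_\ast[M] \in H_{2k}(B\mathrm{GL}(n, \mathbb{C}))$.

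To prove surjectivity I would choose, for each partition $\mu$ of $k$ with $\ell(\mu) \leq n$, the pair
\[
M_\mu = \mathbb{CP}^{\mu_1} \times \cdots \times \mathbb{CP}^{\mu_{\ell(\mu)}}, \qquad E_\mu = \Bigl(\bigoplus_i \pi_i^\ast \mathcal{O}(1)\Bigr) \oplus \underline{\mathbb{C}^{n - \ell(\mu)}}.
\]
Letting $x_i$ denote the pulled-back hyperplane class from the $i$-th factor of $M_\mu$, the splitting principle gives $c_j(E_\mu) = e_j(x_1, \ldots, x_{\ell(\mu)})$, so the Chern number $\int_{M_\mu} c_\nu(E_\mu)$ equals the coefficient of $\prod_i x_i^{\mu_i}$ in $\prod_j e_{\nu_j}(x_1, \ldots, x_{\ell(\mu)})$. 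Since $e_j$ vanishes in fewer than $j$ variables, this entry is zero unless $\nu_1 \leq \ell(\mu)$; and when $\nu = \mu^T$ the entry equals the number of $0$-$1$ matrices with row sums $\mu$ and column sums $\mu^T$, which is $1$, uniquely realised by the Ferrers matrix of $\mu$. The Chern-number matrix $\bigl(\int_{M_\mu} c_\nu(E_\mu)\bigr)_{\mu,\nu}$ is therefore triangular with unit diagonal in an appropriate partition ordering, hence unimodular over $\mathbb{Z}$, and the classes $\{f_\ast[M_\mu]\}_{\mu \vdash k,\; \ell(\mu)\leq n}$ form a $\mathbb{Z}$-basis of $H_{2k}(B\mathrm{GL}(n,\mathbb{C}))$.

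The main obstacle will be the combinatorial check of triangularity and of the unit diagonal entries, which reduces to two classical facts about symmetric functions: the vanishing of $e_j$ in fewer than $j$ variables and the uniqueness of the Ferrers $0$-$1$ matrix with row sums $\mu$ and column sums $\mu^T$. Both are elementary, but the matching of partition orderings between rows and columns must be done carefully for the triangular structure to be visible.
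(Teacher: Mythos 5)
Your combinatorial half is fine (and close in spirit to the paper's argument: the paper also reduces to degree $2n$, takes products of low-codimension examples indexed by partitions, and shows the Chern-number matrix is triangular with unit diagonal, using the $s_I$ basis instead of your transpose-partition/Ferrers argument). The genuine gap is in the geometric input. You assert that because $Y$, the total space of $E$, is open and $p^*E$ is a complex quotient of $TY$, ``Gromov's h-principle integrates this to a genuine $\Gamma_n^\mathbb{C}$-structure, namely a map $Y \to B\Gamma_n^\mathbb{C}$ whose composition with $Bd$ classifies $p^*E$.'' No h-principle does this. The Haefliger/Gromov--Phillips theorem for open manifolds (and its holomorphic analogue proved in this paper, Theorems \ref{CTransversalityTheorem} and \ref{GammaStructureHomotopicToFoliation}) takes as formal data a $\Gamma_n^\mathbb{C}$-structure $\mathcal{E}$ --- i.e.\ a homotopy class of map to $B\Gamma_n^\mathbb{C}$ --- \emph{together with} an epimorphism $TY \to N\mathcal{E}$, and upgrades it to a genuine (holomorphic or transversely holomorphic) foliation. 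It cannot manufacture the map to $B\Gamma_n^\mathbb{C}$ from bundle data alone: producing a lift of the classifying map $Y \simeq M \to B\mathrm{GL}(n,\mathbb{C})$ through $Bd$ is exactly the obstruction problem governed by $\pi_*(B\overline{\Gamma_n^\mathbb{C}})$, which is only known to vanish for $* \leq n+1$, whereas your classes live in degrees up to $2n$. So your construction is circular: it assumes precisely the kind of lifting statement that the Homology Surjection Theorem (together with the Connectivity Theorem) is being proved in order to obtain. If arbitrary rank-$n$ bundles over open manifolds lifted to $B\Gamma_n^\mathbb{C}$ as you claim, Theorem \ref{Dimension10AndLess} and much more would follow immediately, which is exactly what is not known.

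The fix is to replace ``prescribed bundles $E_\mu$ plus h-principle'' by explicit holomorphic foliations whose normal-bundle Chern numbers you can compute, as in Section \ref{Examples}: for each $k \leq n$, the globally generated bundle $\mathrm{Hom}(\mathcal{O}(-1), T\mathbb{CP}^k \oplus \underline{\mathbb{C}})$ has a nowhere vanishing section, giving a holomorphic line subbundle of $T(\mathbb{CP}^k \times \mathbb{C})$ isomorphic to $\pi^*\mathcal{O}(-1)$; being one-dimensional it integrates by Frobenius to a codimension-$k$ holomorphic foliation, whose top normal Chern number is $(k+1)-(-1)^k$, corrected to $1$ by taking disjoint union with $\overline{\mathbb{CP}^k}$ with its tautological $\Gamma_k^\mathbb{C}$-structure. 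Products of these over partitions of $n$ are honest $\Gamma_n^\mathbb{C}$-structures, and then your unimodularity argument (or the paper's Theorem \ref{LinearIndependence}) finishes the proof. Note that here one has no freedom to prescribe the normal bundle; one only controls enough of its Chern numbers ($s_k = 1$) to force the determinant to be $1$, which is why the paper works with the $s_I$ rather than with sums of hyperplane bundles.
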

This theorem is an improvement on a theorem of Hurder \cite{Hurder}, who 
proved the same statement for homology with rational coefficients. Our proof is similar to that of 
Hurder, which is based on the rational independence of Chern numbers. The refinement to integral coefficient requires a construction of new $\Gamma_n^\mathbb{C}$-structures on $\mathbb{CP}^n$, 
which does not previously appear anywhere in the literature on $\Gamma_n^\mathbb{C}$-structures.

The following corollary of theorem \ref{HomologySurjectivityTheorem} follows immediately from the universal coefficient theorem together with 
the fact that $H_*(B\mathrm{GL}(n, \mathbb{C}), \mathbb{Z}) = 0$ for $\mathbb{*}$ odd. 
\begin{corollary} \label{InjectivityCorollary}
For any abelian group $G$ (not necessarily finitely generated), the map 
$Bd^*: H^*(B\mathrm{GL}(n, \mathbb{C}), G) \to H^*(B\Gamma_n^\mathbb{C}, G)$ is injective for $* \leq 2n$.
\end{corollary}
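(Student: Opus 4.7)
The plan is to derive the corollary from the Homology Surjection Theorem by transporting it across the universal coefficient theorem, exploiting the very rigid structure of $H_*(B\mathrm{GL}(n,\mathbb{C}),\mathbb{Z})$.

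\medskip

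First I would recall that $H_*(B\mathrm{GL}(n,\mathbb{C}),\mathbb{Z})$ is free abelian and concentrated in even degrees; indeed, it is dual to the polynomial ring on the Chern classes $c_1,\ldots,c_n$. In particular, for every $k$ the group $H_{k-1}(B\mathrm{GL}(n,\mathbb{C}),\mathbb{Z})$ is either zero or free abelian, so $\mathrm{Ext}(H_{k-1}(B\mathrm{GL}(n,\mathbb{C}),\mathbb{Z}),G)=0$ for any abelian $G$. The universal coefficient theorem therefore collapses to a natural isomorphism
\[
H^k(B\mathrm{GL}(n,\mathbb{C}),G)\;\cong\;\mathrm{Hom}\bigl(H_k(B\mathrm{GL}(n,\mathbb{C}),\mathbb{Z}),G\bigr).
\]

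\medskip

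Next I would invoke the Homology Surjection Theorem to conclude that for $k\leq 2n$ the map $Bd_*\colon H_k(B\Gamma_n^\mathbb{C},\mathbb{Z})\to H_k(B\mathrm{GL}(n,\mathbb{C}),\mathbb{Z})$ is surjective onto a free abelian group, hence splits. Applying $\mathrm{Hom}(-,G)$ turns this split surjection into a split injection
\[
\mathrm{Hom}\bigl(H_k(B\mathrm{GL}(n,\mathbb{C}),\mathbb{Z}),G\bigr)\;\hookrightarrow\;\mathrm{Hom}\bigl(H_k(B\Gamma_n^\mathbb{C},\mathbb{Z}),G\bigr).
\]
Now I compare the universal coefficient sequences for $B\mathrm{GL}(n,\mathbb{C})$ and $B\Gamma_n^\mathbb{C}$ via the naturality square induced by $Bd$: the top row is the isomorphism above, the right-hand vertical arrow is the just-established injection, and the bottom row is exact with the arrow to $\mathrm{Hom}(H_k(B\Gamma_n^\mathbb{C},\mathbb{Z}),G)$ being surjection onto the $\mathrm{Hom}$-piece. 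A one-line diagram chase then yields injectivity of $Bd^*$ in degrees $\leq 2n$: if $\alpha\in H^k(B\mathrm{GL}(n,\mathbb{C}),G)$ maps to zero in $H^k(B\Gamma_n^\mathbb{C},G)$, its image in $\mathrm{Hom}(H_k(B\Gamma_n^\mathbb{C},\mathbb{Z}),G)$ vanishes, hence so does its image in $\mathrm{Hom}(H_k(B\mathrm{GL}(n,\mathbb{C}),\mathbb{Z}),G)$, and the top isomorphism forces $\alpha=0$.

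\medskip

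There is no real obstacle here; the content of the corollary is wholly contained in the Homology Surjection Theorem together with the classical calculation of $H_*(B\mathrm{GL}(n,\mathbb{C}),\mathbb{Z})$. The only point that needs mild care is to remember that the freeness of the target in the Homology Surjection Theorem is what makes both $\mathrm{Ext}$-terms disappear, so that injectivity in cohomology really is a formal consequence of surjectivity in homology; without torsion-freeness one would at best obtain injectivity on the $\mathrm{Hom}$-quotient and would have to analyse the $\mathrm{Ext}$-terms separately.
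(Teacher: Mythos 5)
Your argument is correct and is essentially the paper's own (only sketched there): the odd-degree vanishing makes $H_*(B\mathrm{GL}(n,\mathbb{C}),\mathbb{Z})$ free, so the universal coefficient sequence for $B\mathrm{GL}(n,\mathbb{C})$ collapses to the $\mathrm{Hom}$ term, and naturality plus the Homology Surjection Theorem gives injectivity of $Bd^*$ in degrees $\leq 2n$. The only superfluous step is invoking a splitting of $Bd_*$ -- applying $\mathrm{Hom}(-,G)$ to any surjection already yields an injection.
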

By a theorem of Bott\cite{Bott}, $Bd^*: H^*(B\mathrm{GL}(n, \mathbb{C}), \mathbb{Q}) \to 
H^*(B\Gamma_n^\mathbb{C}, \mathbb{Q})$ is the zero map for $*>2n$ so this result is sharp.
We end the introduction by deducing theorem \ref{Dimension10AndLess} from theorem 
\ref{HomologySurjectivityTheorem}.
\begin{proof}[Proof of theorem \ref{Dimension10AndLess}]
	Let $M$ be an open almost complex $8$-manifold, and let $f:M \to B\mathrm{GL}(4, \mathbb{C})$ be a map classifying its complex tangent bundle. 
	By Haefligers theory \cite{Haefligger1, Haefligger2}, it suffices to construct a lift of $f$ to $B\Gamma_4^\mathbb{C}$.
	Since $M$ is open $M$ is homotopic to its $7$-skeleton, and thus 
	combining the cellular approximation theorem with the fact that $B\mathrm{GL}(4, \mathbb{C})$ only has even cells, the map 
	$f:M \to B\mathrm{GL}(4, \mathbb{C})$ is homotopic to a map that factors through the $6$-skeleton of 
	$B\mathrm{GL}(4, \mathbb{C})$. 
	Thus, it suffices to construct a section of $Bd: B\Gamma_4^\mathbb{C} \to B\mathrm{GL}(4, \mathbb{C})$ over 
	the $6$-skeleton of $B\mathrm{GL}(4, \mathbb{C})$.
	
	By the connectivity theorem $\pi_*(B\overline{\Gamma_4^\mathbb{C}})) = 0$ for $* < 5$, 
	so the primary (and only) obstruction for lifting the $6$-skeleton of $B\mathrm{GL}(4, \mathbb{C})$ 
	is a cohomology class in $H^6(B\mathrm{GL}(4, \mathbb{C}), \pi_5(B\overline{\Gamma_4^\mathbb{C}})))$.
	Naturality of the primary obstruction implies that the obstruction lies in the kernel of 
	$Bd^*: H^6(B\mathrm{GL}(4, \mathbb{C}), \pi_5(B\overline{\Gamma_4^\mathbb{C}})) 
	\to H^6(B\Gamma_4^\mathbb{C}, \pi_5(B\overline{\Gamma_4^\mathbb{C}}))$. Thus,  
	corollary \ref{InjectivityCorollary} implies that there exists a section over the $6$-skeleton of 
	$B\mathrm{GL}(4, \mathbb{C})$. 
\end{proof}

\textbf{Acknowledgments:}
This article is largely based on the author’s doctoral dissertation, written under the supervision of Dennis Sullivan. The author is grateful to Dennis Sullivan, H. Blaine Lawson, Marie-Louise Michelsohn, and Scott O. Wilson for their valuable comments on this work. He also gratefully acknowledges the generous support provided by the Simons Foundation International during his doctoral studies.

\section{Examples of $\Gamma_n^{\mathbb{C}}$-structures} \label{Examples} \noindent 
In this section, we produce a wide variety of examples of $\Gamma_n^{\mathbb{C}}$-structures. 
They all arise from holomorphic foliations on open complex manifolds. The setup is as follows. 
First, consider a compact complex manifold $M$ of complex dimension $n$. Let $\underline{\mathbb{C}}$ 
denote the trivial rank $1$ complex vector bundle over $M$, 
and suppose that $TM \oplus \underline{\mathbb{C}}$ admits a holomorphic line subbundle $L$. 
Then, pulling back along the natural (holomorphic) projection 
$\mathrm{pr}_M: M \times \mathbb{C} \to M$, we obtain a holomorphic line distribution 
on $M \times \mathbb{C}$. Since the distribution has complex dimension 1, it is automatically integrable, 
and thus determines a holomorphic foliation of complex codimension $n$, 
which we view as a $\Gamma_n^{\mathbb{C}}$-structures on $M \times \mathbb{C}$. Up to homotopy, we may 
equate it with a $\Gamma_n^{\mathbb{C}}$-structure on $M$.

We begin by deducing a sufficient condition for a holomorphic vector bundle to admit a nowhere vanishing global holomorphic section. Recall that a holomorphic vector bundle $E \to M$ of rank $k$ is said to be globally generated if for every 
$p \in M$ there exist $k$ global holomorphic sections $\sigma_1, \ldots \sigma_k: M \to E$ such that the vectors 
$\sigma_1(p), \ldots, \sigma_n(p)$ are linearly independent. 
\begin{lemma}
Let $M$ be a compact complex manifold of dimension $n$ and $E$ a holomorphic vector bundle 
over $M$ of rank $r>n$. If $E$ is globally generated, then there exists a holomorphic 
section which vanishes nowhere.
\end{lemma}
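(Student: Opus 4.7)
The plan is to do a dimension count on the incidence variety of pairs (point, vanishing section), using that compactness of $M$ makes the space of global sections finite dimensional and that Remmert's proper mapping theorem lets us control the image of a holomorphic map out of a compact-fibered source.

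Let $V = H^0(M, E)$. Since $M$ is compact, Cartan--Serre guarantees $V$ is a finite-dimensional complex vector space. Because $E$ is globally generated, the evaluation map $\mathrm{ev}_p: V \to E_p$ is surjective for every $p \in M$; in particular its kernel $V_p := \{s \in V : s(p) = 0\}$ is a linear subspace of codimension exactly $r$. First I would form the incidence set
\[
Z = \{(p, s) \in M \times V : s(p) = 0\},
\]
which is a closed analytic subset of $M \times V$ cut out by the holomorphic section $(p, s) \mapsto s(p)$ of the pullback of $E$.

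Next I would observe that the projection $Z \to M$ is the total space of the holomorphic vector bundle $p \mapsto V_p \subseteq V$, i.e.\ the kernel bundle of the surjective bundle map $M \times V \to E$. In particular $Z$ is a complex manifold of dimension
\[
\dim_{\mathbb{C}} Z = n + \dim_{\mathbb{C}} V - r.
\]
The hypothesis $r > n$ gives $\dim_{\mathbb{C}} Z < \dim_{\mathbb{C}} V$.

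The main (and only) nontrivial step is to transfer this dimension inequality into a statement about the image of the other projection $\pi_V: Z \to V$. Since $M$ is compact, $\pi_V$ is proper, so Remmert's proper mapping theorem implies that $\pi_V(Z)$ is a closed analytic subset of $V$ whose dimension is bounded above by $\dim_{\mathbb{C}} Z < \dim_{\mathbb{C}} V$. Therefore $\pi_V(Z)$ is a proper analytic subset of $V$, and its complement is non-empty (indeed open and dense). Any $s$ in this complement is, by construction, a holomorphic section of $E$ that vanishes nowhere, which concludes the argument.
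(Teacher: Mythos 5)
Your argument is correct and is essentially the paper's own proof: the same incidence set $Z=\{(p,s):s(p)=0\}$, the same fiber-dimension count $\dim Z = n+\dim V - r < \dim V$, and the conclusion that the projection to $V$ cannot be surjective. The only difference is that you justify that last step explicitly via properness and Remmert's proper mapping theorem, a detail the paper leaves implicit.
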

\begin{proof}
Let $V$ denote the vector space of global holomorphic sections. Since $M$ is compact, $V$ is finite 
dimensional; say $\mathrm{dim}(V) = N$. Consider the set of marked sections vanishing 
at their marked point  
$$
M \times V \supseteq Z = \{(x,s) | s(x) = 0\}
$$
There are natural projections $Z \xrightarrow{p} M$ and $Z \xrightarrow{q} V$. 
Since $E$ is globally generated, then for each $x \in M$, the fiber $p^{-1}(x) \subseteq V$ is a linear subspace of codimension $r$, hence dimension $N-r$. Thus 
$$\mathrm{dim}(Z) = \mathrm{dim}(M) + \mathrm{dim}(p^{-1}(x)) = n + N - r < N = \mathrm{dim}(V)$$
because $r > n$. So the projection $Z \xrightarrow{q} V$ cannot be onto. However, if every global section vanished somewhere, then $q(Z) = V$, a contradiction. Hence, there exists a global section that vanishes nowhere.
\end{proof}
Next we show how a holomorphic line bundle of $TM \oplus \mathbb{C}$ give rise to a corresponding holomorphic line distribution on $M \times \mathbb{C}$. 
\begin{lemma} \label{SecondLemma}
	Let $M$ be a compact complex manifold, set $X = M \times \mathbb{C}$ and let $\pi: X \to M$ be the natural projection. 
	If $TM \oplus \underline{\mathbb{C}}$ admits a holomorphic line subbundle isomorphic to $L$, 
	then $TX$ admits a holomorphic line bundle isomorphic to $\pi^*(L)$.
\end{lemma}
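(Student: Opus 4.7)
The plan is to reduce the statement to a direct application of the product decomposition of the holomorphic tangent bundle. First I would unpack $TX$ for $X = M \times \mathbb{C}$. Writing $\pi_M$ and $\pi_{\mathbb{C}}$ for the two projections out of $X$, the standard product decomposition for complex manifolds gives
$$TX \;\cong\; \pi_M^*(TM) \;\oplus\; \pi_{\mathbb{C}}^*(T\mathbb{C})$$
as holomorphic vector bundles on $X$. Since $T\mathbb{C}$ is canonically trivialized by $\partial/\partial z$, its pullback $\pi_{\mathbb{C}}^*(T\mathbb{C})$ is a trivial holomorphic line bundle on $X$, which is equally $\pi_M^*(\underline{\mathbb{C}}_M)$.

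Combining these two observations yields
$$TX \;\cong\; \pi_M^*(TM) \;\oplus\; \pi_M^*(\underline{\mathbb{C}}_M) \;\cong\; \pi_M^*\bigl(TM \oplus \underline{\mathbb{C}}_M\bigr)$$
as holomorphic bundles on $X$. Now the hypothesis provides a holomorphic line subbundle inclusion $L \hookrightarrow TM \oplus \underline{\mathbb{C}}_M$. Pulling this back along the holomorphic map $\pi_M$ (which preserves holomorphic subbundles) produces a holomorphic line subbundle $\pi_M^*(L) \hookrightarrow \pi_M^*(TM \oplus \underline{\mathbb{C}}_M) \cong TX$, exhibiting $TX$ as admitting a holomorphic line subbundle isomorphic to $\pi^*(L)$, as required.

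There is no serious obstacle here. The only thing to be careful about is that each identification is genuinely holomorphic and natural, so that the resulting subbundle of $TX$ is in fact (isomorphic to) $\pi^*(L)$ rather than merely a topological line subbundle of the correct isomorphism type. Both the product splitting of $TX$ and the trivialization of $T\mathbb{C}$ are canonical in the holomorphic category, so this is automatic; the argument therefore reduces to the chain of natural isomorphisms above together with functoriality of $\pi_M^*$.
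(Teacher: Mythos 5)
Your proof is correct and follows essentially the same route as the paper: both rest on the holomorphic identification $TX \cong \pi^*(TM \oplus \underline{\mathbb{C}})$ (coming from the canonical trivialization of $T\mathbb{C}$) together with functoriality of pullback. The paper merely packages the final step as pulling back a nowhere vanishing holomorphic section of $\mathrm{Hom}(L, TM \oplus \underline{\mathbb{C}})$ to one of $\mathrm{Hom}(\pi^*(L), TX)$, which is the same datum as your pulled-back subbundle inclusion $\pi^*(L) \hookrightarrow TX$.
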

\begin{proof}
	Clearly, $TM \oplus \underline{\mathbb{C}}$ admits a holomorphic line bundle isomorphic to $L$ if and only if 
	$\mathrm{Hom}(L, TM \oplus \mathbb{C})$ admits a nowhere vanishing holomorphic section. 
	Since $TX = TM \times T\mathbb{C} = \pi^*(TM \oplus \mathbb{C})$, then 
	$$\mathrm{Hom}(\pi^*(L), TX) = \mathrm{Hom}(\pi^*(L), \pi^*(TM \oplus \underline{\mathbb{C}}))= \pi^*(\mathrm{Hom}(L, TM \oplus \mathbb{C}))$$ 
	by the universal property of the pullback. 
	Thus, a nowhere vanishing holomorphic section of $\mathrm{Hom}(L, TM \oplus \underline{\mathbb{C}})$ pulls back to 
	a nowhere vanishing holomorphic section of $\mathrm{Hom}(\pi^*(L), TX)$. 
	But, $TX$ admits a holomorphic line bundle isomorphic to $\pi^*(L)$ if and only if 
	$\mathrm{Hom}(\pi^*(L), TX)$ admits a nowhere vanishing holomorphic section.
\end{proof}
\noindent
Let $E$ be a $n$-dimensional complex vector bundle over a closed oriented $2n$-dimensional manifold $M$. 
The splitting principle implies that, there is a 
manifold $\tilde{M}$ and a smooth map $f: \tilde{M} \to M$ inducing an injection on the cohomology groups 
$$
f^*: H^k(M, \mathbb{Z}) \to H^k(\tilde{M}, \mathbb{Z} )
$$ and such that the pullback bundle $f^*(E)$ is a 
direct sum of line bundles. Therefore, the total Chern class $c(f^*(E))$ can be identified with 
$$
(1 + t_1) \cdots (1 + t_n) \in H^*(\tilde{M}, \mathbb{Z})
$$
for some $t_i \in H^2(\tilde{M}, \mathbb{Z})$. 
The classes $t_1, \ldots, t_k$ are called the Chern 
roots of $E$. The point being, that the symmetric functions on $t_1, \ldots, t_k$ can always be 
identified with well defined cohomology classes on 
$M$.

Define two monomials in 
$t_1, \ldots, t_k$ to be equivalent if some permutation of $t_1, \ldots, t_k$ 
transforms one into the other. 
For any partition $I=i_1, \ldots, i_r$ of $n$, we denote by 
$s_I(E)=s_{i_1, \ldots, i_r}(E)$ 
the summation of all monomials in the Chern roots which are equivalent to 
$t_1^{i_1} \cdots t_r^{i_r}$. This is a symmetric polynomial and thus determines a well defined cohomology class on $M$. Likewise, we denote by $s_I(E)[M]$ 
the integer obtained by evaluating $s_I(E)$ on the fundamental class. For further information on this approach to the characteristic classes of complex vector bundles, see \cite{MilSta}.

\begin{theorem} \label{ExampleTheorem}
	For every $n > 0$, there exist a closed oriented manifold $M^{2n}$ (not necessarily connected) and a $\Gamma_n^{\mathbb{C}}$-structure $\mathcal{H}$ on $M$ such that $s_n(N\mathcal{H})[M] = 1$. 
\end{theorem}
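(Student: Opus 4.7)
The approach is to apply the construction from the beginning of this section to $M = \mathbb{CP}^n$ with two different holomorphic line subbundles of $TM \oplus \underline{\mathbb{C}}$; this produces two $\Gamma_n^{\mathbb{C}}$-structures on $\mathbb{CP}^n$ whose $s_n$-values are $n$ and $n+1$. Since these are coprime, a suitable disjoint union with one component oriented in reverse then realizes $s_n = 1$.

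The starting point is the Euler sequence $0 \to \underline{\mathbb{C}} \to \mathcal{O}(1)^{\oplus(n+1)} \to T\mathbb{CP}^n \to 0$, which gives $T\mathbb{CP}^n \oplus \underline{\mathbb{C}} \cong \mathcal{O}(1)^{\oplus(n+1)}$. I would then single out two holomorphic line subbundles of $TM \oplus \underline{\mathbb{C}}$: first, $L_1 = \underline{\mathbb{C}}$, the obvious trivial summand, with quotient $N_1 \cong T\mathbb{CP}^n$; second, $L_2 = \mathcal{O}(1)$, a single coordinate factor under the Euler isomorphism, with quotient $N_2 \cong \mathcal{O}(1)^{\oplus n}$. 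By the construction described at the start of this section, each $L_i$ gives rise to a $\Gamma_n^{\mathbb{C}}$-structure $\mathcal{F}_i$ on $\mathbb{CP}^n$ whose normal bundle is isomorphic to $N_i$.

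Next I would compute $s_n(N_i)[\mathbb{CP}^n]$ using two standard properties: $s_n$ is additive on short exact sequences of complex vector bundles (Chern roots concatenate) and vanishes on trivial bundles. Writing $h = c_1(\mathcal{O}(1))$, the bundle $N_2 = \mathcal{O}(1)^{\oplus n}$ has all $n$ Chern roots equal to $h$, so $s_n(N_2) = n h^n$ and $s_n(N_2)[\mathbb{CP}^n] = n$. Applying additivity to the Euler sequence together with $s_n(\underline{\mathbb{C}}) = 0$ gives $s_n(T\mathbb{CP}^n) = s_n(\mathcal{O}(1)^{\oplus(n+1)}) = (n+1) h^n$, so $s_n(N_1)[\mathbb{CP}^n] = n+1$.

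Finally, since $\gcd(n, n+1) = 1$, I would set $M = \mathbb{CP}^n \sqcup \overline{\mathbb{CP}^n}$, where the overline denotes reversed orientation, and equip the first component with $\mathcal{F}_1$ and the second with $\mathcal{F}_2$. Evaluation against the reversed fundamental class negates, so the resulting $\Gamma_n^{\mathbb{C}}$-structure $\mathcal{F}$ satisfies $s_n(N\mathcal{F})[M] = (n+1) - n = 1$. The one point deserving careful verification is that the construction really yields a $\Gamma_n^{\mathbb{C}}$-structure on $\mathbb{CP}^n$ itself with normal bundle isomorphic to $N_i$; this is essentially built into the setup, since the foliation is pulled back from $\mathbb{CP}^n$ to $\mathbb{CP}^n \times \mathbb{C}$ along the projection, and restricting along the section $x \mapsto (x,0)$ recovers the quotient $(T\mathbb{CP}^n \oplus \underline{\mathbb{C}})/L_i$ as the normal bundle on $\mathbb{CP}^n$.
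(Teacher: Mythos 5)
Your first structure is fine: $L_1=\underline{\mathbb{C}}$ is a genuine holomorphic line subbundle of $T\mathbb{CP}^n\oplus\underline{\mathbb{C}}$, the induced foliation of $\mathbb{CP}^n\times\mathbb{C}$ has normal bundle $T\mathbb{CP}^n$, and $s_n(T\mathbb{CP}^n)[\mathbb{CP}^n]=n+1$; this is exactly one of the two pieces the paper uses (there it appears simply as the complex analytic structure on $\mathbb{CP}^n$). The gap is in your second structure. The Euler sequence $0\to\underline{\mathbb{C}}\to\mathcal{O}(1)^{\oplus(n+1)}\to T\mathbb{CP}^n\to 0$ splits smoothly but \emph{not} holomorphically, so the isomorphism $T\mathbb{CP}^n\oplus\underline{\mathbb{C}}\cong\mathcal{O}(1)^{\oplus(n+1)}$ you invoke is only a topological isomorphism of complex vector bundles (already for $n=1$, $\mathcal{O}(2)\oplus\mathcal{O}\not\cong\mathcal{O}(1)\oplus\mathcal{O}(1)$ by Grothendieck's splitting theorem). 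Worse, $\mathcal{O}(1)$ is not a holomorphic subbundle of $T\mathbb{CP}^n\oplus\underline{\mathbb{C}}$ at all: a holomorphic bundle map $\mathcal{O}(1)\to T\mathbb{CP}^n\oplus\mathcal{O}$ is a section of $T\mathbb{CP}^n(-1)\oplus\mathcal{O}(-1)$; since $H^0(\mathcal{O}(-1))=0$, it factors through $T\mathbb{CP}^n(-1)$, and every nonzero section of $T\mathbb{CP}^n(-1)$ comes from a vector $v\in\mathbb{C}^{n+1}$ via the (twisted) Euler sequence and vanishes at the point $[v]$. So your $L_2$ does not exist, and since the construction of the section needs a \emph{holomorphic} subbundle (to get a holomorphic distribution and apply the holomorphic Frobenius theorem), the $\Gamma_n^{\mathbb{C}}$-structure with $s_n=n$ is not produced; a topological splitting is of no use here.

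This is precisely why the paper's proof uses the subbundle isomorphic to $\mathcal{O}(-1)$ rather than $\mathcal{O}(1)$: $\mathrm{Hom}(\mathcal{O}(-1),T\mathbb{CP}^n\oplus\underline{\mathbb{C}})=T\mathbb{CP}^n(1)\oplus\mathcal{O}(1)$ \emph{is} globally generated of rank $n+1>n$, so the dimension-count lemma of that section gives a nowhere vanishing holomorphic section, hence a genuine holomorphic line subbundle. The resulting normal bundle has $s_n[\mathbb{CP}^n]=(n+1)-(-1)^n$, and pairing this with the structure of value $n+1$ on the orientation-reversed copy gives $\pm1$, which suffices (reverse the total orientation if necessary). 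If you want to salvage your outline, replace $L_2=\mathcal{O}(1)$ by this $\mathcal{O}(-1)$ subbundle and redo the $s_n$ computation accordingly; as written, the step producing the value $n$ fails.
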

\begin{proof}
	Both $T\mathbb{CP}^n$ and $\mathcal{O}(1) \to \mathbb{CP}^n$ are globally generated, so the bundle  $$\mathrm{Hom}(\mathcal{O}(-1), T\mathbb{CP}^n \oplus \underline{\mathbb{C}}) = \mathcal{O}(1) \otimes T\mathbb{CP}^n \oplus \mathcal{O}(1)$$ 
	is globally generated. By the first lemma, it therefore admits a nowhere vanishing holomorphic section.  
	By lemma \ref{SecondLemma}, the manifold $X = \mathbb{CP}^n \times \mathbb{C}$ admits a holomorphic subbundle $L \subseteq TX$ isomorphic 
	to $\pi^*(\mathcal{O}(-1))$. Since $\mathcal{O}(-1)$ is a line bundle, $L$ is involutive, and thus by Frobenius' theorem, $L$ determines a holomorphic foliation $\mathcal{F}$ on $X$. 
	The normal bundle of $\mathcal{F}$ is given by 
	$$N\mathcal{F} = T\mathbb{CP}^n \times T\mathbb{C} / \pi^*(\mathcal{O}(-1))$$ 
	Since $X$ is homotopic to $\mathbb{CP}^n$, then $\mathcal{F}$ determines a $\Gamma_{n}^{\mathbb{C}}$-structure $\mathcal{F}|_{\mathbb{CP}^n}$ on $\mathbb{CP}^n$ 
	with normal bundle $N\mathcal{F}|_{\mathbb{CP}^n}$. We then have 
	$$s_n(N\mathcal{F}|_{\mathbb{CP}^n})[\mathbb{CP}^n] = s_n(T\mathbb{CP}^n)[\mathbb{CP}^n] - \mathcal{O}(-1)[\mathbb{CP}^n] = (n+1) - (-1)^n$$ 
	Finally, take $M = \mathbb{CP}^n \sqcup \overline{\mathbb{CP}^n}$, and define the $\Gamma_n^{\mathbb{C}}$-structure $\mathcal{H}$ on $M$ to be induced by $\mathcal{F}|_{\mathbb{CP}^n}$ on one component and by the standard complex structure on the other. Then 
	$$
	s_n(N\mathcal{H})[M] = 	s_n(N\mathcal{F}|_{\mathbb{CP}^n})[\mathbb{CP}^n] + s_n(T\mathbb{CP}^n)[\overline{\mathbb{CP}^n}] = -(-1)^n = 1
	$$
	for $n$ odd. Interchanging the structures we get 
	$$
	s_n(N\mathcal{H})[M] = 	s_n(T\mathbb{CP}^n)[\mathbb{CP}^n] + s_n(N\mathcal{F}|_{\mathbb{CP}^n})[\overline{\mathbb{CP}^n}] = (-1)^n = 1
	$$
	for $n$ even.
\end{proof}

\section{Proof of the homology surjection theorem} \label{ProofHomology} \noindent
In this section, we show how to deduce the 
Homology Surjection Theorem from theorem \ref{ExampleTheorem}.
First, we need the following standard result about independence of Chern numbers.
\begin{theorem} \label{LinearIndependence}
For every $k \leq n$ let $\mathcal{F}^k$ be a $\Gamma_k^{\mathbb{C}}$-structure on a closed oriented manifold 
$M^{2k}$ of dimension $2k$ with $s_k(N\mathcal{F})[M^{2k}] = 1$. Then, the $p(n) \times p(n)$ matrix 
$$
\Big( c_{i_1}\ldots c_{i_r}\left(\mathcal{F}^{j_1}\times \ldots \times \mathcal{F}^{j_s}\right) \Big)
$$
of Chern numbers, where $i_1, \ldots, i_r$ and $j_1, \ldots, j_r$ range over all partitions of $n$, 
has determinant 1. In particular, the columns form a basis over the integers for the Chern numbers.
\end{theorem}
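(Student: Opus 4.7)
The plan is to pass from the Chern class matrix to an auxiliary matrix of monomial symmetric function evaluations, which admits an explicit triangular structure. As a preliminary reduction, I observe that $H^{2n}(B\mathrm{GL}(n,\mathbb{C}); \mathbb{Z})$ is free abelian of rank $p(n)$, admitting both $\{c_I\}_{I \vdash n}$ and $\{s_I\}_{I \vdash n}$ as $\mathbb{Z}$-bases (the first because $H^*(B\mathrm{GL}(n,\mathbb{C}); \mathbb{Z}) = \mathbb{Z}[c_1, \ldots, c_n]$, the second because the monomial symmetric polynomials form a $\mathbb{Z}$-basis of the ring of integer symmetric polynomials). Consequently their transition matrix is unimodular, and the theorem reduces to showing that the auxiliary matrix
\[
B_{IJ} := s_I\bigl(\mathcal{F}^{j_1} \times \cdots \times \mathcal{F}^{j_s}\bigr)\bigl[M^{2j_1} \times \cdots \times M^{2j_s}\bigr]
\]
has determinant $\pm 1$.

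Expanding $s_I$ on the external direct sum of the pulled-back normal bundles via the standard addition formula for monomial symmetric polynomials, together with the Künneth theorem, yields
\[
B_{IJ} = \sum_{(I_1, \ldots, I_s)} \prod_{k=1}^s s_{I_k}(N\mathcal{F}^{j_k})[M^{2j_k}],
\]
where the sum runs over ordered decompositions of the multiset of parts of $I$ into sub-multisets $I_k$, and where only decompositions satisfying $|I_k| = j_k$ for every $k$ contribute (the other terms vanish by degree). Such a decomposition exists precisely when the parts of $I$ can be grouped into packages summing to $j_1, \ldots, j_s$, i.e.\ when $I$ refines $J$. Hence $B_{IJ} = 0$ whenever $I$ does not refine $J$, giving triangularity with respect to any total order compatible with the refinement partial order on partitions of $n$.

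On the diagonal $I = J = (i_1 \geq \cdots \geq i_s)$, there are exactly $s$ parts to distribute among $s$ bins, each of which must be nonempty since $j_k = i_k \geq 1$; pigeonhole forces every $I_k$ to consist of a single part, which must equal $i_k$. The hypothesis $s_k(N\mathcal{F}^k)[M^{2k}] = 1$ then gives
\[
B_{II} = \prod_{k=1}^s s_{i_k}(N\mathcal{F}^{i_k})[M^{2i_k}] = 1.
\]
Thus $B$ is triangular with $1$'s on the diagonal, so $\det B = 1$, and composing with the unimodular change of basis from the first step yields unimodularity of the Chern number matrix. The main obstacle is the combinatorial step: carefully verifying both the vanishing of $B_{IJ}$ outside the refinement order and the pigeonhole uniqueness of the diagonal decomposition; the comparison between the $c$-basis and the $s$-basis is otherwise standard symmetric function theory.
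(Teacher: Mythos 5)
Your proposal is correct and takes essentially the same route as the paper: pass from Chern monomials to the $s_I$ basis by a unimodular change of basis, expand $s_I$ of the external product via the addition formula to get vanishing unless $I$ refines $J$ (hence triangularity), and use the hypothesis $s_k(N\mathcal{F}^k)[M^{2k}]=1$ to see the diagonal entries are $1$. You merely make explicit two details the paper leaves implicit (the unimodularity of the $c_I$--$s_I$ transition and the pigeonhole uniqueness of the diagonal decomposition), which is fine.
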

\begin{proof} 
In place of Chern numbers themselves, we may use the basis $s_I$. 
Observe that 
$$
s_I(\mathcal{F}^{j_1}\times \ldots \times \mathcal{F}^{j_s})
= \sum_{I_1, \ldots, I_q = I} 
s_{I_1}(\mathcal{F}^{j_1}\times \ldots \times \mathcal{F}^{j_s}) \cdots 
s_{I_q}(\mathcal{F}^{j_1}\times \ldots \times \mathcal{F}^{j_s})
$$
where the sum is over all partitions $I_1$ of $j_1$, $I_2$ of $j_2$, $\ldots$, and $I_q$ of $j_q$ with 
juxtaposition $I_1\ldots I_q$ equal to $I$. Thus, the characteristic number 
$s_I(E^{j_1}\times \ldots \times E^{j_s})$ is zero unless the partition $I=i_1,\ldots,i_r$ is a 
refinement of $j_1, \ldots, j_q$. In particular, it is zero unless $r \geq q$. Hence, we can 
arrange the partitions $i_1, \ldots, i_q$ and $j_1, \ldots, j_q$ such that the matrix 
$$
\Big( s_{{i_1}\ldots {i_r}}\left(\mathcal{F}^{j_1}\times \ldots \times \mathcal{F}^{j_s}\right) \Big)
$$
is upper triangular with diagonal entries
$$s_{i_1 \ldots i_r}(\mathcal{F}^{i_1}\times \ldots \times \mathcal{F}^{i_r}) 
= s_{i_1}(\mathcal{F}^{i_1}) \cdot s_{i_1}(\mathcal{F}^{i_1}) = 1$$
hence, the matrix has determinant $1$. 
\end{proof}

\begin{proof}[Proof of the Homology Surjection Theorem]
We first reduce to the case of $*=2n$. Let $h:B\Gamma_{n}^\mathbb{C} \to B\Gamma_{n+1}^\mathbb{C}$ be the map which 
classifies the universal $\Gamma_{n}^\mathbb{C}$-structure as a $\Gamma_{n+1}^{\mathbb{C}}$-structure. Then the diagram 
\begin{center}
\begin{tikzcd}
B\Gamma_{n}^\mathbb{C}  
\arrow[d, "Bd"] 
\arrow[r, "h"]& 
B\Gamma_{n+1}^{\mathbb{C}} \arrow[d, "Bd"] \\ 
B\mathrm{GL}(n, \mathbb{C}) \arrow[r, "Bi"] & B\mathrm{GL}(n+1, \mathbb{C})&
\end{tikzcd}
\end{center}
commutes. $Bi_*: H_m(B\mathrm{GL}(n, \mathbb{C}), \mathbb{Z}) \to H_m(B\mathrm{GL}(n + 1, \mathbb{C}), \mathbb{Z})$ is well known to be an isomorphism 
for $m \leq 2n$, thus if $Bd_*: H_{2n}(B\Gamma_n^{\mathbb{C}}, \mathbb{Z}) \to H_{2n}(B\mathrm{GL}(n, \mathbb{C}), \mathbb{Z})$ is surjective then 
so is $Bd_*: H_{2n}(B\Gamma_N^{\mathbb{C}}, \mathbb{Z}) \to H_{2n}(B\mathrm{GL}(N, \mathbb{C}), \mathbb{Z})$ for all $N>n$.

Given a $\Gamma_n^{\mathbb{C}}$-structure $\mathcal{F}$ on a $2n$-dimensional closed oriented manifold $M$, 
it determines Chern numbers by evaluating the Chern classes of $N\mathcal{F}$ on the fundamental class. 
The push forward of the fundamental class $N\mathcal{F}_*([M])$ then determines a unique element in the image of 
$Bd_*: H_*(B\Gamma_n^\mathbb{C}, \mathbb{Z}) \to H_*(B\mathrm{GL}(n, \mathbb{C}), \mathbb{Z})$. Thus, it suffices to construct a 
set of closed oriented manifolds with $\Gamma^{\mathbb{C}}_n$-structures whose Chern numbers form a basis 
over the integers. Now, combining theorem \ref{ExampleTheorem} and theorem \ref{LinearIndependence} yield the conclusion.
\end{proof}

\section{Application to Complex Structures on Open Manifolds} \label{Applications}
This section provides some further applications to the problem of constructing complex analytic coordinates on open manifolds. The first three of these results are slight improvements upon the results of Landweber \cite{Landweber} and Adachi \cite{Adachi2}.

The strategy in all cases is the same; in order to construct complex analytic coordinates, it suffices to 
produce a lift of the map classifying the tangent bundle 
$X \to BGL(2n, \mathbb{R})$ to $B\Gamma_n^\mathbb{C}$. 
The map $B\Gamma_n^\mathbb{C} \to BGL(2n, \mathbb{R})$ factors through $BGL(n, \mathbb{C})$ 
and a lift from $BGL(2n, \mathbb{R})$ to $BGL(n, \mathbb{C})$ corresponds to an almost complex structure 
on the tangent bundle of $X$. So, with an almost complex structure assumed 
we need only to produce a lift of the induced map 
$X \to BGL(n, \mathbb{C})$ to $B\Gamma_n^\mathbb{C}$.

\begin{theorem}
Let $M$ be an open manifold of dimension $2n$ such that 
$$
H^i(M, \mathbb{Z}) = 0
$$ 
for $i > n + 2$. Then $M$ admits a complex structure if and only if it admits an 
almost complex structure. Moreover, any component of almost complex structure contains 
an integrable one.  
\end{theorem}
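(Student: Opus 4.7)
The strategy mirrors that of Theorem \ref{Dimension10AndLess}: with an almost complex structure fixed, Haefliger's theory reduces the problem to lifting the classifying map $f: M \to B\mathrm{GL}(n, \mathbb{C})$ of the complex tangent bundle along the fibration $Bd: B\Gamma_n^\mathbb{C} \to B\mathrm{GL}(n, \mathbb{C})$ whose homotopy fiber is $B\overline{\Gamma_n^\mathbb{C}}$.

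I would construct the lift cell by cell on a CW structure of $M$. Since $B\mathrm{GL}(n, \mathbb{C})$ is simply connected, the local coefficient systems on $M$ pulled back from the fibration are trivial, so the obstruction to extending a partial lift from the $k$-skeleton of $M$ to the $(k+1)$-skeleton is a class in $H^{k+1}(M, \pi_k(B\overline{\Gamma_n^\mathbb{C}}))$. By the Connectivity Theorem, the coefficient group vanishes for $k \leq n+1$, so every such obstruction is automatically zero.

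For obstructions in degree $k+1 \geq n+3$, the plan is to use the cohomological hypothesis on $M$ to force the ambient cohomology group to vanish, regardless of its coefficient group. Under finite generation of $H_*(M, \mathbb{Z})$, the vanishing $H^i(M, \mathbb{Z}) = 0$ for $i > n+2$ together with the universal coefficient theorem forces $H_i(M, \mathbb{Z}) = 0$ for $i > n+2$ and $H_{n+2}(M, \mathbb{Z})$ to be torsion-free. A second application of universal coefficients then yields $H^i(M, G) = 0$ for every abelian group $G$ and every $i > n+2$, which kills all remaining obstructions.

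The main subtlety is the finite-generation assumption tacitly required in the universal coefficient step. To avoid it, one can split the obstruction range: for $k+1 \leq 2n$, the naturality plus injectivity argument of Theorem \ref{Dimension10AndLess} (via Corollary \ref{InjectivityCorollary}) kills the universal obstructions on $B\mathrm{GL}(n, \mathbb{C})$ independently of any assumption on $M$; for $k+1 > 2n$, the hypothesis combined with the observation that an open $2n$-manifold has the homotopy type of a CW complex of dimension at most $2n-1$ yields the requisite vanishing. Either route produces the required lift, after which Haefliger's theorem provides the desired integrable almost complex structure homotopic to the given one.
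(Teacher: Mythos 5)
There is a genuine gap, and it lies exactly at the point you flag. Your main route does obstruction theory cell by cell on $M$, so the obstruction in degree $k+1$ has coefficients $\pi_k(B\overline{\Gamma_n^\mathbb{C}})$. For an open $2n$-manifold the relevant range is $n+2 \leq k \leq 2n-2$, where these homotopy groups are completely unknown and, in particular, are not known to be finitely generated (indeed $\pi_{2n+1}$ is infinitely generated by Bott, so there is no reason to expect finite generation just below). The hypothesis $H^i(M,\mathbb{Z})=0$ for $i>n+2$ only controls cohomology with finitely generated coefficients (via Bockstein or universal coefficients); to kill $H^{k+1}(M,\pi_k(B\overline{\Gamma_n^\mathbb{C}}))$ for arbitrary coefficient groups you would need, e.g., $H_{n+2}(M,\mathbb{Z})$ to be free, and your universal-coefficient derivation of this uses a finite-generation assumption on $H_*(M,\mathbb{Z})$ that is not part of the hypothesis. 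Your fallback does not repair this: the naturality-plus-injectivity argument of Theorem \ref{Dimension10AndLess} applies only to the \emph{primary} obstruction, which is a well-defined universal class on $B\mathrm{GL}(n,\mathbb{C})$ pulled back along $Bd$; the higher obstructions over the skeleta of $B\mathrm{GL}(n,\mathbb{C})$ depend on the choice of partial section and are not natural universal classes, so Corollary \ref{InjectivityCorollary} says nothing about them. If that step were valid through degree $2n$, it would combine with the fact that an open $2n$-manifold has the homotopy type of a complex of dimension $\leq 2n-1$ to prove Conjecture \ref{AlmostComplexImpliesComplex} in every dimension, which is far beyond what the paper establishes -- a clear sign the step fails.

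The paper's proof sidesteps both issues by a different decomposition. First, since $B\overline{\Gamma_n^\mathbb{C}}$ is $(n+1)$-connected, a section of $Bd$ exists over the entire $(n+2)$-skeleton of $B\mathrm{GL}(n,\mathbb{C})$ with no obstruction at all. Second, one compresses the classifying map $f: M \to B\mathrm{GL}(n,\mathbb{C})$ into that skeleton; the obstructions to this compression lie in $H^i(M, \pi_i(B\mathrm{GL}(n,\mathbb{C}), B\mathrm{GL}(n,\mathbb{C})^{(n+2)}))$ for $i>n+2$, and these coefficient groups are finitely generated (a simply connected pair of finite type), so the integral hypothesis does suffice to kill them. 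The moral difference is that the paper arranges for the unknown homotopy groups of $B\overline{\Gamma_n^\mathbb{C}}$ never to appear as obstruction coefficients beyond the range covered by the Connectivity Theorem, whereas your direct lift on $M$ forces them to appear, and the stated hypothesis on $H^*(M,\mathbb{Z})$ cannot control them.
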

\begin{proof}
Since the homotopy fiber is $n$ connected then we can always construct a lift over the 
$n+1$ skeleton of $B\mathrm{GL}(n, \mathbb{C})$. Since the primary 
obstruction is natural, corollary \ref{InjectivityCorollary} guarantee that it must vanish. 
Hence it is possible to construct a lift over the $n+2$ skeleton of $B\mathrm{GL}(n, \mathbb{C})$. Thus, it suffices to show that the map classifying $TM$ can be homotopied to lie within the $n+2$ skeleton. The obstructions for doing so live in $H^i(M, \mathbb{Z})$ for $i > n + 2$, hence the vanishing of these groups implies the vanishing of the obstructions.
\end{proof}

\begin{theorem}
For $n > 4$. Let $M$ be an $n$-dimensional smooth manifold whose stable tangent bundle 
admits a complex linear structure. Then $M \times \mathbb{R}^{n-4}$ admits a complex structure. 
\end{theorem}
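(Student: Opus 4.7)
The plan is to reduce the statement to the preceding theorem. Set $N = M \times \mathbb{R}^{n-4}$. Because $n - 4 \geq 1$, this is an open manifold of even real dimension $2(n-2)$, and it is homotopy equivalent to $M$. Since $M$ has CW dimension at most $n$, we have $H^i(N, \mathbb{Z}) \cong H^i(M, \mathbb{Z}) = 0$ for $i > n$. Interpreting the preceding theorem with its ``$n$'' taken to be $n - 2$, the cohomological hypothesis $H^i(N, \mathbb{Z}) = 0$ for $i > (n-2) + 2 = n$ is then automatic. Consequently, as soon as I equip $N$ with an almost complex structure, the preceding theorem supplies a homotopic integrable one, which is the required complex analytic structure.

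It remains to produce an almost complex structure on $N$. The hypothesis on $M$ provides a lift of the classifying map $M \to BO$ of the stable tangent bundle to $BU$. The fibration $BU(k) \to BU$ has fiber $U/U(k)$ which is $2k$-connected, so for $k = n - 2$ the induced map $[M, BU(n-2)] \to [M, BU]$ is a bijection, since $\dim M = n \leq 2(n-2) = 2k$. Thus the stable complex structure comes from a rank-$(n-2)$ complex bundle $E$ on $M$. The underlying real bundle $E_{\mathbb{R}}$ has rank $2(n-2) = 2n-4$ and is stably equivalent to $TM$. The analogous connectivity estimate for $BO(r) \to BO$, applied with $r = 2n - 4$, shows that $[M, BO(2n-4)] \to [M, BO]$ is bijective whenever $\dim M \leq 2n - 5$, i.e.\ whenever $n \geq 5$. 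This is precisely where the hypothesis $n > 4$ enters: it forces $E_{\mathbb{R}} \cong TM \oplus \underline{\mathbb{R}^{n-4}} = TN$, giving an honest complex structure on the tangent bundle of $N$.

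The main obstacle is the careful bookkeeping with ranks and stable equivalence in the second step: one must ensure that the stable complex structure on $TM$ is realized on the specific bundle $TM \oplus \underline{\mathbb{R}^{n-4}}$, not merely on some further stabilization. The inequality $n > 4$ is invoked precisely to make both of the relevant maps $[M, BU(n-2)] \to [M, BU]$ and $[M, BO(2n-4)] \to [M, BO]$ bijective. Once $N$ is shown to be almost complex, the rest is a direct application of the preceding theorem, which in turn is a quick consequence of the Connectivity Theorem.
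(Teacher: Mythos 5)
Your proof is correct and follows essentially the same route as the paper: both produce an almost complex structure on $N = M\times\mathbb{R}^{n-4}$ and then lift its classifying map to $B\Gamma_{n-2}^{\mathbb{C}}$ using the Connectivity Theorem together with the fact that $N$ is homotopy equivalent to the $n$-dimensional manifold $M$ (you package this lifting step as an application of the preceding theorem, while the paper runs the skeleton argument directly). Your write-up is in fact more complete in one respect: the destabilization argument via the connectivity of $U/U(n-2)$ and $O/O(2n-4)$, which is precisely where the hypothesis $n>4$ enters, is only asserted in a single sentence in the paper.
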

\begin{proof}
Since the stable tangent bundle of $M$ admits a complex linear structure, then 
$M \times \mathbb{R}^{n-4}$ admits an almost complex structure. Since 
$\mathbb{R}^{n-4}$ is contractible then the map from 
$M \times \mathbb{R}^{n-4}$ to $B\mathrm{GL}(n-2, \mathbb{C}))$ classifying the complex vector bundle 
factors through the $n$-skeleton of $B\mathrm{GL}(n-2, \mathbb{C})$. 
Hence, it admits a lift to $B\Gamma_{n-2}^\mathbb{C}$.
\end{proof}

\begin{theorem}
Let $M$ be an $n$-dimensional orientable smooth manifold with $5 \leq n \leq 7$, and let 
$w_i(M) \in H^i(M, \mathbb{Z}/2\mathbb{Z})$ denote the $i$th Stiefel–Whitney class of $M$. 
If there exist integral cohomology classes $u_2 \in H^2(M, \mathbb{Z})$ and 
$u_6 \in H^6(M, \mathbb{Z})$ such that 
 $u_2 \, \mathrm{ mod } \, 2  = w_2(M)$ and $u_6 \, \mathrm{mod} \, 2 = w_6(M)$. Then 
$M \times \mathbb{R}^{n-4}$ admits a complex structure.
\end{theorem}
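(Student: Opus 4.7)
The plan is to reduce to the immediately preceding theorem, which asserts that if the stable tangent bundle of $M$ admits a complex linear structure then $M \times \mathbb{R}^{n-4}$ admits a complex structure. The entire content of the present statement is therefore to show that the integral-lifting hypotheses on $w_2$ and $w_6$ imply the existence of such a stable complex structure on $M$.

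To do this I will run classical obstruction theory for the lift of the classifying map of the stable tangent bundle along the fibration $BU \to BSO$ with fiber $SO/U$. The long exact homotopy sequence of $U \to SO \to SO/U$, combined with the Bott-periodic values of $\pi_*(U)$ and $\pi_*(SO)$ and the fact that the stable map $\pi_3(U) \to \pi_3(SO)$ is multiplication by $2$, yields $\pi_2(SO/U) = \mathbb{Z}$, $\pi_3(SO/U) = \mathbb{Z}/2$, $\pi_4(SO/U) = \pi_5(SO/U) = 0$, and $\pi_6(SO/U) = \mathbb{Z}$. Because $M$ is orientable and $\dim M \leq 7$, the only potentially nonzero obstructions are two primary integer classes $W_3 = \beta w_2 \in H^3(M, \mathbb{Z})$ and $W_7 = \beta w_6 \in H^7(M, \mathbb{Z})$, and a secondary $\mathbb{Z}/2$ class in $H^4(M, \mathbb{Z}/2)$. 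The hypotheses that $w_2$ and $w_6$ admit integral lifts $u_2, u_6$ are precisely the vanishing of $W_3$ and $W_7$, which kills both primary obstructions.

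The main obstacle will be the secondary $\mathbb{Z}/2$ obstruction in $H^4(M, \mathbb{Z}/2)$ coming from $\pi_3(SO/U) = \mathbb{Z}/2$. I plan to handle it by examining the second $k$-invariant of the Postnikov tower of $SO/U$: it lies in $H^4(K(\mathbb{Z}, 2), \mathbb{Z}/2) = \mathbb{Z}/2$ and is identified with the mod-$2$ square of the fundamental class. Pulled back to $M$ along the choice of lift produced by $u_2$, this realises the secondary obstruction (modulo the indeterminacy introduced by altering the lift on the $3$-skeleton) as the reduction mod $2$ of $u_2 \smile u_2$; on an oriented manifold in this dimension range Wu's formula together with the indeterminacy trivializes this class. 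Should the direct identification of the $k$-invariant and its indeterminacy prove too delicate, I will fall back on the classical theorem of Massey, sharpened by Peterson and Thomas, that on an oriented manifold of dimension at most $7$ the conditions $W_3 = 0$ and $W_7 = 0$ alone already guarantee a stable complex structure. Either way, once this structure is in hand, the preceding theorem completes the proof.
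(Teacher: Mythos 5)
Your overall skeleton --- reduce to the immediately preceding theorem so that it suffices to put a stable complex linear structure on $TM$, then run obstruction theory for lifting the classifying map $M \to B\mathrm{SO}$ through $B\mathrm{U}$ --- is exactly the paper's. The paper's proof is essentially two sentences: the integral lift $u_2$ of $w_2$ kills the primary obstruction $W_3$, and by Massey's theorem the next (and, for $\dim M \leq 7$, only remaining) obstruction may be identified with $W_7$, which vanishes precisely when $w_6$ admits an integral lift. So your ``fallback'' citation of Massey is in fact the proof the paper gives, and with it your argument closes.

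Your main line, however, contains a genuine error. The stable forgetful map $\pi_3(\mathrm{U}) \to \pi_3(\mathrm{SO})$ is an isomorphism, not multiplication by $2$: the generator of $\pi_3(\mathrm{U})$ corresponds to a complex bundle over $S^4$ with $c_2 = 1$, whose underlying real bundle has $p_1 = -2c_2 = -2$ and hence generates $\pi_3(\mathrm{SO})$ (the real generator has $p_1 = \pm 2$). It is the complexification map $\pi_3(\mathrm{SO}) \to \pi_3(\mathrm{U})$ that is multiplication by $2$; what you have computed is $\pi_3(\mathrm{U}/\mathrm{O})$, the fiber of $B\mathrm{O} \to B\mathrm{U}$, rather than $\pi_3(\mathrm{SO}/\mathrm{U}) = \pi_4(\mathrm{O}) = 0$ (equivalently, $\mathrm{SO}(6)/\mathrm{U}(3) \cong \mathbb{CP}^3$ has trivial $\pi_3$, and this is in the stable range). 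Consequently there is no secondary $\mathbb{Z}/2$ obstruction in $H^4(M,\mathbb{Z}/2)$ at all, and the Postnikov/$k$-invariant/Wu-formula paragraph --- which as written is only a plan, with the indeterminacy never actually controlled --- is moot. One further point to state carefully: the obstruction in $H^7(M,\mathbb{Z})$ coming from $\pi_6(\mathrm{SO}/\mathrm{U}) = \mathbb{Z}$ is a higher obstruction, defined only modulo indeterminacy, so calling it a ``primary integer class $W_7 = \beta w_6$'' is not automatic; the identification of this obstruction with $W_7$ is precisely the content of the theorem of Massey that the paper cites, and your proof should lean on that citation rather than present the identification as formal.
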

\begin{proof}
In light of the previous theorem it suffices to show that the existence 
of integral cohomology classes $u_2 \in H^2(M, \mathbb{Z})$ and 
$u_6 \in H^6(M, \mathbb{Z})$ such that 
$u_2 \, \mathrm{ mod } \, 2  = w_2(M)$ and $u_6 \, \mathrm{mod} \, 2 = w_6(M)$ implies that 
the tangent bundle of $M$ admits a stable complex linear structure. 

The existence of an integral lift of $w_2$ implies that $W_3(M)$ vanishes, which is the primary 
obstruction for an orientable vector bundle to admit a complex structure. 
According to Massey \cite{Massey}, the next obstruction may always be identified with $W_7(M)$ which vanishes 
if and only if $w_6$ admits an integral lift. 
\end{proof}

\begin{theorem} \label{FiniteImpliesConj}
Suppose $\pi_*(B\overline{\Gamma}_n^\mathbb{C})$ is a finite group for $* \leq 2n-2$.
Then conjecture \ref{AlmostComplexImpliesComplex} is true.
\end{theorem}
\begin{proof}
Conjecture \ref{AlmostComplexImpliesComplex} would clearly follow if we could lift the $2n-2$ 
skeleton of $B\mathrm{GL}(n, \mathbb{C})$ to $B\Gamma_n^\mathbb{C}$. 
Let $\mathrm{GL}(n, \mathbb{C})^\delta$ denote $\mathrm{GL}(n, \mathbb{C})$ with the discrete topology, 
then the identity function is a continuous group homeomorphism. 
Moreover, the map factors through $\Gamma_n^\mathbb{C}$ in the obvious way. 

We may assume that $B\mathrm{GL}(n, \mathbb{C})^\delta$ is a subcomplex of $B\mathrm{GL}(n, \mathbb{C})$. 
Since $B\mathrm{GL}(n, \mathbb{C})^\delta$ admits a section, the problem of lifting the $2n-2$ skeleton of 
$B\mathrm{GL}(n, \mathbb{C})$ can be resolved, if one could extend the existing section over the subcomplex 
$B\mathrm{GL}(n, \mathbb{C})^\delta$ to the union of the $2n-2$ skeleton of $B\mathrm{GL}(n, \mathbb{C})$ and 
$B\mathrm{GL}(n, \mathbb{C})^\delta$. 
The obstruction for extending such a section lies in the groups 
$$
H^*(B\mathrm{GL}(n, \mathbb{C}), B\mathrm{GL}(n, \mathbb{C})^\delta, \pi_{*-1}(B\overline{\Gamma}_n^\mathbb{C}))
$$
for $* \leq 2n-2$. Since this is in the stable range, a theorem of 
Suslin \cite{Suslin} implies that these groups are trivial whenever 
$\pi_{*-1}(B\overline{\Gamma}_n^\mathbb{C})$ is a finite group.
\end{proof}
The condition that $\pi_*(B\overline{\Gamma}_n^\mathbb{C})$ is a finite group, 
is much weaker than conjecture \ref{HolomorphicHaefligerThurstonConjecture} and thus the theorem above deserves to be highlighted.
\bibliography{mybib}{}
\bibliographystyle{plain}
\end{document}